\documentclass[conference]{ieeeconf}

\IEEEoverridecommandlockouts
\usepackage[svgnames,dvipsnames]{xcolor}
\usepackage{textcomp}
\usepackage{amssymb, amsfonts, mathtools, xargs, tensor, units, cite, stmaryrd, mathrsfs}
\usepackage{algorithm}
\usepackage{algpseudocode}
\usepackage{graphicx}
\usepackage{booktabs}

\usepackage[unicode=true, hidelinks]{hyperref}

\newcommand{\proj}{\operatorname{proj}}
\newcommand{\SGN}{\operatorname{SGN}}

\newtheorem{assumption}{Assumption}

\newtheorem{remark}{Remark}

\newtheorem{lemma}{Lemma}

\newtheorem{theorem}{Theorem}

\usepackage{tikz}
\usepackage{pgfplots}
\usepackage{pgfplotstable}
\usepgfplotslibrary{colorbrewer}
\pgfplotsset{compat=1.18}
\usepgfplotslibrary{colorbrewer}
\usetikzlibrary{spy}
\usetikzlibrary{positioning,calc}
\definecolor{cmgood}{RGB}{76,175,80}
\definecolor{cmbad}{RGB}{239,83,80}
\definecolor{cmneutral}{RGB}{245,245,245}
\definecolor{cmtext}{RGB}{40,40,40}

\begin{document}

\title{\LARGE \bf Adaptive Control with Sparse Identification of Nonlinear Dynamics}
\author{Trivikram Satharasi$^{1}$, Tochukwu E. Ogri$^{1}$, Muzaffar Qureshi$^{1}$, Kyle Volle$^{2}$, Rushikesh Kamalapurkar$^{1}$
\thanks{This research was supported in part by the Air Force Research Laboratory under contract number FA8651-24-1-0019. Any opinions, findings, or recommendations in this article are those of the author(s), and do not necessarily reflect the views of the sponsoring agencies.}%
\thanks{$^{1}$ Department of Mechanical and Aerospace Engineering, University of Florida, email: {\tt\footnotesize \{t.satharasi@ufl.edu, tochukwu.ogri, muzaffar.qureshi, rkamalapurkar\} @ufl.edu}.}%
\thanks{$^{2}$ Torch Technologies, Shalimar, FL, USA, email: {\tt \footnotesize Kyle.Volle@torchtechnologies.com.}}}
\maketitle
\thispagestyle{empty}
\pagestyle{empty}

\begin{abstract}
This paper develops a sparsity-promoting integral concurrent learning (SP-ICL) adaptation law for a linearly parametrized uncertain nonlinear control-affine system. The unknown parameters are learned using ICL with sparsity-promoting $\ell_1$ regularization. The use of $\ell_1$ regularization for sparsity promotion is common in system identification and machine learning; however, unlike existing approaches, this paper develops an online parameter update law that integrates the regularization penalty with ICL via sliding modes. Using the SP-ICL update law, we show via non-smooth Lyapunov analysis that the trajectories of the closed-loop system are ultimately bounded. Simulations verify the effectiveness of the sparsity penalty in the SP-ICL update law on recovering sparse dynamics during trajectory tracking. 
\end{abstract}

\section{Introduction}

The performance of most control systems depends on having accurate mathematical models to ensure stability and achieve desired closed-loop behavior. These models are typically derived from fundamental physical laws, such as Euler-Lagrange equations based on Newtonian mechanics, or satellite orbital and attitude dynamics based on gravitational theory \cite{Brunton2016}. However, in practice, such models often include parameters that are not exactly known. For instance, in autonomous driving applications, the friction coefficient varies with tire properties and road conditions, while air density fluctuates with changes in temperature and humidity.

These sources of uncertainty have led to the development of adaptive control approaches, including recursive least squares and model reference adaptive control \cite{SCC.Ioannou.Sun1996}, which estimate the unknown parameters using Lyapunov-derived adaptive update laws to guarantee parametric convergence. However, these adaptive approaches usually require the persistent excitation (PE) condition \cite{SCC.Astrom.Wittenmark1989, SCC.Krstic.Kanellakopoulos.ea1995, SCC.Ioannou.Sun1996, SCC.Kamalapurkar2017, SCC.Pan.Shi.ea2024}, which is often difficult to satisfy in practice.

To avoid the PE assumption, recent adaptive control methods ensure parameter convergence under weaker, verifiable conditions. One such method is dynamic regressor extension and mixing (DREM), which transforms the original vector regression problem into multiple decoupled scalar regressions, enabling convergence with reduced excitation \cite{SCC.Pyrkin.Bobtsov.ea2019}. Another approach, memory regressor extension (MRE), incorporates state history alongside current measurements, allowing convergence under relaxed conditions such as interval or finite excitation \cite{SCC.Katiyar.BasuRoy.ea2022, SCC.Pan.Shi.ea2024, SCC.Qureshi.Ogri.ea2025, SCC.Parikh.Kamalapurkar.ea2019, SCC.Ogri.Bell.ea2023, SCC.Xing.Na.ea2025}.

Despite these advancements in adaptive control, the methods discussed above require a correct mathematical model structure to guarantee convergence and cannot handle cases where the underlying dynamics are unknown or uncertain. For uncertain dynamical systems, controllers can be designed by first identifying the system dynamics from collected data, such as input-output measurements or state trajectories, and then using the identified model for control design. Several data-driven methods have been developed for system identification, including the autoregressive models\cite{Akaike1998}, dynamic mode decomposition (DMD)\cite{Kutz2016}, eigensystem realization algorithm (ERA)\cite{Juang1985}, Gaussian Process\cite{SCC.Csato.Opper2002,SCC.Qureshi.Ogri.ea2024}, Koopman operator-based approaches\cite{Mezic2005,Hagen2008}, Nonlinear Autoregressive Moving Average with Exogenous Inputs (NARMAX) models \cite{billings.2013.Narmax}, Volterra series\cite{Brockett1975}, and neural networks. While these approaches enable control design without prior knowledge of the underlying dynamics, most of these models are trained offline using historical data. 

Online adaptive control methods using neural networks address this limitation by integrating parameter adaptation into a Lyapunov-based framework, guaranteeing tracking performance \cite{Patil.Le.2021,SCC.Le.Patil.ea2024, Hart.Patil.2025}. While these methods allow real-time adaptation, they rely on the chosen network structure and typically produce models that approximate the observed dynamics rather than recovering the true governing equations, with validity generally limited to the regions covered by the training data and a risk of overfitting under environmental changes such as varying friction. These limitations motivate data-driven sparse identification techniques such as SINDy \cite{Brunton2016}, which aim to discover sparse models that reveal the underlying physical laws directly from observed trajectories.

The SINDy algorithm \cite{Brunton2016} leverages the observation that many nonlinear dynamical systems depend on only a few active basis functions. SINDy identifies these active basis functions from a large candidate library using sparsity-promoting techniques, such as $\ell_1$-regularization \cite{SCC.Istiqphara.Wahyunggoro.ea2024, SCC.MedaCampana.Sanchez.ea2025}, or weaker formulations that incrementally reconstruct features for regression \cite{SCC.Russo.Laiu.ea2025}. Computationally efficient variants, such as E-SINDy, have also been developed \cite{Fasel2022}. While SINDy can be used online within an MPC framework \cite{Fasel2021, Fasel2022}, existing methods have not been studied from an adaptive control perspective with stability and convergence guarantees.

This paper presents a sparsity-promoting integral concurrent learning (SP-ICL) method for the adaptive identification of nonlinear system dynamics. By incorporating $\ell_1$-regularization into an integral concurrent learning (ICL) framework, the approach promotes sparsity during online parameter estimation. In contrast to standard sparse identification algorithms (e.g., SINDy) that predominantly operate offline without closed-loop stability guarantees, SP-ICL embeds the identification process within a Lyapunov-based adaptive control design. Because the $\ell_1$ penalty is non-smooth, the resulting parameter update law is governed by a differential inclusion. Employing non-smooth Lyapunov analysis, we derive sufficient conditions to establish convergence of tracking and parameter estimation errors. The resulting framework yields sparse, closed-form models of nonlinear dynamics suitable for real-time implementation. 

Due to the set-valued nature of the generalized gradient of the $\ell_1$ cost function, incorporating a projection operator \cite{SCC.Ioannou.Sun1996, SCC.Ogri.Bell.ea2023} to enforce boundedness of the parameter estimates, is challenging. For example, the projected differential inclusion must satisfy regularity conditions such as upper semicontinuity, which can be nontrivial to verify \cite{SCC.Bressan.Cellina1989}. Instead, we exploit properties of the set-valued signum function to show that by projecting just the continuous part of the update law, the parameter estimates remain forward invariant with respect to the prescribed compact set. Furthermore, upper semicontinuity of the resulting update law follows from the continuity of the projection operator.

\section{Problem Formulation}\label{sec:problemformulation}
Consider the nonlinear control-affine system 
\begin{equation}\label{eq:dynamics}
    \dot{x} = Y(x)\theta + g(x)u, \quad x(t_{0}) = x_{0},
\end{equation}
where $x \in \mathbb{R}^n$ is the state, $\theta \in \mathbb{R}^{p}$ is an unknown parameter vector, and $u \in \mathbb{R}^m$ is the control input. The function $Y: \mathbb{R}^{n} \to \mathbb{R}^{n \times p}$ is a known over-complete library of candidate nonlinear basis functions, and $g: \mathbb{R}^{n} \to \mathbb{R}^{n \times m}$ is the known control effectiveness matrix. 

The representation in \eqref{eq:dynamics} assumes that the dynamics admit a decomposition over $Y(x)$ with a sparse coefficient vector $\theta$. The functions $Y$ and $g$ are assumed to be locally Lipschitz continuous.

The objective is to generate a sparse estimate of $\theta$ online while ensuring tracking of a bounded, continuously differentiable desired trajectory 
$x_d: \mathbb{R}_{\geq 0} \to \mathbb{R}^n$ that satisfies 
\begin{equation}
    \|x_d(t)\| \le \bar{x}_{d}, \quad \forall t \ge 0,
\end{equation}
where $\bar{x}_{d} > 0$ is a known constant. The following assumption is made to facilitate the formulation of the tracking controller.
\begin{assumption}\label{ass:pseudo}
Let $\mathcal{X} \subset \mathbb{R}^n$ be a compact set. The control effectiveness matrix $g$ has full row rank for all $x\in\mathcal{X}$, so a locally Lipschitz continuous right pseudoinverse $g^+(x)$ exists with $g(x) g^+(x) = I_n$.
\end{assumption}

The assumption above can only apply to fully actuated or overactuated systems ($m \geq n$). It is made to simplify the control design so that we can focus the discussion on sparse parameter estimation.

To quantify the control objective, define the tracking error $e = x - x_d$ and let $\hat{\theta} \in \mathbb{R}^p$ be an estimate of $\theta$, with $\tilde{\theta} \coloneqq \theta - \hat{\theta}$ denoting the parameter estimation error. Under Assumption~\ref{ass:pseudo}, the input is selected as a standard certainty-equivalence feedback
\begin{equation}\label{eq:control}
u = g^+(x)\big(\dot{x}_d - Y(x)\hat{\theta} - K e\big),
\end{equation}
where $K = K^\top \succ 0$ is a constant gain. Using \eqref{eq:control} then yields the tracking error dynamics
\begin{equation}\label{eq:closed_error}
\dot{e} = -K e + Y(x)\tilde{\theta}.
\end{equation}

\section{Parameter Estimator Design}
\label{section:parameterEstimator}

In this section, an SP-ICL update law is developed to estimate the unknown parameters.

\begin{lemma}\label{lem:ErrorTermformulation}
For a fixed delay $T>0$ and for all $t \ge 0$, the system dynamics satisfy the incremental relation
\begin{equation}\label{eq:incrementalRelation}
    U_f(t) \;=\; Y_f(t)\theta,
\end{equation}
where the filtered regressor $Y_f(t)$ and filtered state $U_f(t)$ are defined piecewise as
\begin{equation*}
    Y_f(t) \coloneqq 
    \begin{cases} 
      0_{n \times p}, & t < T \\
      \int_{t-T}^{t} Y(x(\tau))\,d\tau, & t \ge T 
    \end{cases}
\end{equation*}
and
\begingroup\medmuskip=0mu\begin{equation*}
    U_f(t) \coloneqq 
    \begin{cases} 
      0_{n \times 1}, & t < T \\
      x(t) - x(t - T) - \int_{t-T}^{t} g(x(\tau))u(\tau)\,d\tau, & t \ge T.
    \end{cases}
\end{equation*}\endgroup
\end{lemma}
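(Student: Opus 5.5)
The plan is to split into the two cases that appear in the piecewise definitions and to integrate the system dynamics \eqref{eq:dynamics} over a sliding window of length $T$. For $t<T$ there is nothing to prove beyond the definitions: both $U_f(t)$ and $Y_f(t)$ are zero, so $Y_f(t)\theta = 0_{n\times p}\theta = 0_{n\times 1} = U_f(t)$, and \eqref{eq:incrementalRelation} holds trivially.

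For $t\ge T$, I will use the fact that $x$ is a solution of \eqref{eq:dynamics} under the control \eqref{eq:control}. Since $Y$, $g$, $g^+$ are locally Lipschitz and $x_d$ is continuously differentiable, $u(\cdot)$ is continuous along the trajectory as long as $\hat\theta$ is (at least) absolutely continuous. Hence $x$ is absolutely continuous on $[t-T,t]$ and $\dot x(\tau) = Y(x(\tau))\theta + g(x(\tau))u(\tau)$ for almost every $\tau$. The fundamental theorem of calculus for absolutely continuous functions then gives
\begin{equation*}
x(t)-x(t-T) = \int_{t-T}^{t} Y(x(\tau))\,d\tau\,\theta + \int_{t-T}^{t} g(x(\tau))u(\tau)\,d\tau .
\end{equation*}
Here the constant vector $\theta$ has been pulled out of the integral by linearity. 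Rearranging gives $U_f(t) = Y_f(t)\theta$.

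The only delicate point is regularity: I must justify that the integrals exist and that the fundamental theorem of calculus applies. The parameter update law introduced later is a differential inclusion, so solutions are understood in the Filippov/Carathéodory sense. I will therefore phrase the argument for any absolutely continuous solution $x$ on its interval of existence, and use local integrability of $u$ (which follows from continuity of $x$, $\hat\theta$, $\dot x_d$ and of the maps involved). I will also note that the claim is meant "for all $t\ge0$ in the maximal interval of existence," with existence on $[0,\infty)$ deferred to the later stability analysis.

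Finally, I will remark on a notational point. The paper writes $t<T$ versus $t\ge T$ with the initial time $t_0$ implicitly equal to $0$. I will assume $t_0=0$ so that the window $[t-T,t]$ lies in the domain of the solution whenever $t\ge T$.
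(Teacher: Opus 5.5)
Your proposal is correct and follows the same route as the paper, whose entire proof is the one-line observation that the identity follows from the fundamental theorem of calculus applied to \eqref{eq:dynamics}. You spell that argument out: the trivial $t<T$ case, integration over $[t-T,t]$ with $\theta$ pulled out by linearity, and regularity caveats the paper leaves implicit (absolute continuity of Carath\'eodory/Filippov solutions, restriction to the interval of existence, and $t_0=0$).
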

\begin{proof}
The result follows directly from the Fundamental Theorem of Calculus applied to \eqref{eq:dynamics}.
\end{proof}

To enable online parameter adaptation and data reuse, a finite set of delayed samples of size $N \ge 1$ is maintained in a \emph{history stack}, defined as $\mathcal{H} \coloneqq \big\{ {U_f}_i,\, {Y_f}_i \big\}_{i=1}^{N}$, where ${U_f}_i = U_f(t_i)$ and ${Y_f}_i = Y_f(t_i)$ for discrete times $t_i > T$, are updated according to Algorithm~\ref{algo:dataselection}.
We define the memory regression extension signals $\mathcal{Y}$ and $\mathcal{U}$ such that,
\begin{equation}\label{eq:YU}
    \mathcal{Y} = \sum_{i=1}^N \frac{{Y_f}_i^\top {Y_f}_i}{1+\kappa \lVert {Y_f}_i \rVert^2} \,
    \text{ and } \,
    \mathcal{U} = \sum_{i=1}^N \frac{{Y_f}_i^\top {U_f}_i}{1+\kappa \lVert {Y_f}_i \rVert^2},
\end{equation}
where $\left({Y_f}_i, {U_f}_i\right) \in \mathcal H$, and $\kappa>0$ is a user-selected constant gain. The closed-loop system is assumed to satisfy the following excitation condition.

\begin{assumption}\label{ass:fullRank}
The filtered regressor $Y_f:[t_0, \, \infty)\to\mathbb{R}^{n\times p}$ is \emph{finitely exciting} (FE) over some interval $[t_0, t_{s}]$, i.e., there exists a finite collection of sampling instants $\{t_i\}_{i=1}^{N} \subset [t_{0}, t_{s}]$ such that for all $t \geq t_{s}$
\begin{equation}\label{eq:rankCond}
    \lambda_{\min}(\mathcal{Y})
    \ge \underline{y} > 0.
\end{equation}
\end{assumption}
\begin{remark}
The assumption is commonly adopted in ICL-based parameter estimation frameworks (\cite{SCC.Chowdhary.Johnson2010, SCC.Parikh.Kamalapurkar.ea2019,  SCC.Ogri.Bell.ea2023, SCC.Ogri.Qureshi.ea2025a}). Unlike the classical PE condition, however, it requires excitation only over a finite interval, making it a weaker requirement.
\end{remark}

A history stack satisfying \eqref{eq:rankCond} is called \emph{full rank}. The constant $\underline{y}$ quantifies the information richness of the stored data and determines the convergence rate of the parameters, as seen in the subsequent analysis. To guarantee sufficient excitation and improve estimator conditioning and alertness, the history stack $\mathcal{H}$ is updated using the strategy in Algorithm~\ref{algo:dataselection}. 

\begin{algorithm}
\caption{ICL Data Selection}
\label{algo:dataselection}
\begin{algorithmic}[1]
\Require Number of data points $N$, target $\underline{y} > 0$, improvement threshold $\delta \geq 1$
\State Store the first $N$ data: $\{U_{f_i}, Y_{f_i}\}_{i=1}^{N}$
\For{each new datum $(U_{f}(t), Y_f(t))$}
    \State $\mathcal{Y} \gets \sum_{i} Y_{f_i}^{\top}Y_{f_i}$ 
    \Comment{Fill up memory-regressor}
    \If{$\lambda_{\min}\{\mathcal{Y}\} \geq \underline{y}$}
        \Comment{Favor new data if eigenvalue target is already met}
        \For{$j=1$ to $N$}
            \State $y_j \gets \sum_{i\neq j} Y_{f_i}^{\top}Y_{f_i} + Y_f^{\top}(t)Y_f(t)$
            \If{$\lambda_{\min}\{y_j\} \geq \underline{y}$}
                \Comment{Search from oldest to newest}
                \For{$k=j$ to $N-1$}
                    \State $Y_{f_k} \gets Y_{f(k+1)},\; U_{f_k} \gets U_{f(k+1)}$
                \EndFor
                \Comment{Shift data to make room for new datum}
                \State $Y_{f_N} \gets Y_f(t),\; U_{f_N} \gets U_{f}(t)$
                \State \textbf{break}
            \EndIf
        \EndFor
    \Else
        \Comment{If the target is not met, then maximize the minimum eigenvalue}
        \For{$j=1$ to $N$}
            \State $y_j \gets \sum_{i\neq j} Y_{f_i}^{\top}Y_{f_i} + Y_f^{\top}(t)Y_f(t)$
            \If{$\lambda_{\min}\{y_j\} > \delta \cdot \lambda_{\min}\{\underline{y}\}$}
                \For{$k=j$ to $N-1$}
                    \State $Y_{f_k} \gets Y_{f(k+1)},\; U_{f_k} \gets U_{f(k+1)}$
                \EndFor
                \Comment{Shift data to make room for new datum}
                \State $Y_{f_N} \gets Y_f(t),\; U_{f_N} \gets U_{f}(t)$
                \State \textbf{break}
            \EndIf
        \EndFor
    \EndIf
\EndFor
\end{algorithmic}
\end{algorithm}

Under Algorithm~\ref{algo:dataselection}, a candidate data point $(U_{\rm new}, Y_{\rm new})$ replaces the oldest possible entry $({U_f}_{j}, {Y_f}_{j})$ in the history stack $H$ if
\begin{equation}\label{eq:eigMaxRuleRewritten}
\lambda_{\min}(\mathcal{Y}^{\ast}) > \underline{y},
\end{equation}
where $\mathcal{Y}^{\ast} = \sum_{i\neq j}^{N} \frac{{Y_f}_i^\top {Y_f}_i}{1 + \kappa \|{Y_f}_i\|^2} + \frac{Y_{\rm new}^\top Y_{\rm new}}{1 + \kappa \|Y_{\rm new}\|^2}$. By construction, \eqref{eq:eigMaxRuleRewritten} guarantees $\lambda_{\min}(\mathcal{Y}) > \underline{y}$ for all $t > t_{s}$, ensuring the history stack remains \emph{full rank} and new data replaces old data whenever possible without decreasing the minimum eigenvalue of the memory regressor.

\subsection{Sparse Recursive Cost and Generalized Gradient}
\label{subsec:sparse_cost}

To promote sparse parameter estimates while maintaining consistency with the accumulated integral data, consider the $\ell_{1}$-regularized cost
\begin{equation}\label{eq:J}
J(\hat{\theta}) \coloneqq 
\frac{1}{2} \hat{\theta}^\top \mathcal{Y} \hat{\theta} 
- \hat{\theta}^\top \mathcal{U} 
+ \lambda \|\hat{\theta}\|_1,
\end{equation}
where $\lambda>0$ is the sparsity parameter and $\|\hat{\theta}\|_1$ denotes the one-norm of $\hat{\theta}$. The adaptive update law for $\hat{\theta}$ is derived to minimize the regularized cost using gradient descent. Since the $\ell_1-$regularization term is nonsmooth at the origin, a generalized gradient of $J$ is utilized, given by the Clarke sub-differential \cite{SCC.Clarke1975}
\begin{equation}\label{eq:partial_J}
    \partial_{\hat\theta}J(\hat\theta) =  \mathcal{Y}\hat\theta-\mathcal{U}+\lambda\,\SGN(\hat\theta),
\end{equation}
where $\SGN(\hat{\theta})$ denotes the componentwise set-valued sign map, defined for each component $i$ as $\SGN(\hat{\theta}_i) = \big\{ z \in [-1,1] \mid z = \operatorname{sign}(\hat{\theta}_i) \text{ if } \hat{\theta}_i \neq 0 \big\}$.

Define the compact set $\Theta \coloneqq \{\theta \in \mathbb{R}^p : \|\theta\| \, \leq \, r_{\theta}\}$, where $ r_{\theta}>0$ is a known bound. The SP-ICL update law is designed as a differential inclusion using the negative gradient in \eqref{eq:partial_J} as
\begin{equation}\label{eq:theta_dot}
    \dot{\hat{\theta}} \in \proj_\Theta\Big(\hat{\theta}, \phi(x, e, \hat{\theta}), \Gamma\Big) - \lambda \gamma \, \Gamma \SGN(\hat{\theta}) \eqqcolon G(t, \hat{\theta}),
\end{equation}
where $\phi(x, e, \hat{\theta}) \coloneqq \Gamma\Big(Y(x)^\top e + \gamma \big(\mathcal{U} - \mathcal{Y}\hat{\theta}\big)\Big)$, $\gamma>0$ is the ICL-learning gain, $\Gamma = \Gamma^\top \succ 0$ is a diagonal adaptation gain matrix, and $\proj_{\Theta}(\cdot,\cdot,\cdot)$ denotes the smooth projection operator from \cite[Appendix E.1]{SCC.Krstic.Kanellakopoulos.ea1995}, which ensures that $\hat{\theta}(t) \in \Theta$ for all $t \ge t_0$, in the absence of the signum term in \eqref{eq:theta_dot}.

To facilitate the subsequent stability analysis, we show that $\hat{\theta}(t) \in \Theta \oplus \bar{B}(0, \epsilon) =: \Theta_\epsilon$, where $\bar{B}(0, \epsilon)$ is the closed Euclidean ball of radius $\epsilon > 0$ centered at the origin, and $\oplus$ denotes the Minkowski sum, even with the signum term in \eqref{eq:theta_dot}.
\begin{lemma}\label{lem:theta_bound}
 If $\hat{\theta}(t_{0}) \in \Theta_{\epsilon}$, then $\Theta_{\epsilon}$ is positively invariant under the differential inclusion \eqref{eq:theta_dot}. Consequently, $\hat{\theta}(t) \in \Theta_{\epsilon}$ for all $t \ge 0$.
\end{lemma}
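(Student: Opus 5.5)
We will show forward invariance of $\Theta_\epsilon = \{\hat\theta : \|\hat\theta\| \le r_\theta + \epsilon\}$ using the Lyapunov-like function $V(\hat\theta) = \tfrac12 \hat\theta^\top \Gamma^{-1}\hat\theta$. Because $\Gamma$ is diagonal and positive definite, $V$ respects the componentwise structure of $\SGN$; a plain Euclidean norm would not. For any Filippov/Carathéodory solution, $\dot{\hat\theta}(t) = \pi(t) - \lambda\gamma\Gamma s(t)$ for almost every $t$, where $\pi(t) = \proj_\Theta(\hat\theta,\phi,\Gamma)$ is the single-valued continuous part and $s(t) \in \SGN(\hat\theta(t))$. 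Hence, almost everywhere,
\[
\dot V = \hat\theta^\top\Gamma^{-1}\pi - \lambda\gamma\, \hat\theta^\top s .
\]

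\emph{Signum term.} For every $s \in \SGN(\hat\theta)$ we have $\hat\theta^\top s = \sum_i |\hat\theta_i| = \|\hat\theta\|_1 \ge 0$. Indeed, components with $\hat\theta_i \ne 0$ give $s_i = \operatorname{sign}(\hat\theta_i)$, and components with $\hat\theta_i = 0$ contribute nothing. So the signum term always satisfies $-\lambda\gamma\hat\theta^\top s \le 0$ and pulls $V$ down.

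\emph{Projection term.} We invoke the standard property of the smooth projection of \cite[Appendix E.1]{SCC.Krstic.Kanellakopoulos.ea1995}. That operator is built on a convex function $\mathcal P(\hat\theta)$ whose sublevel set $\{\mathcal P \le 0\}$ is $\Theta$ and whose level set $\{\mathcal P \le 1\}$ is $\Theta_\epsilon$ (choose the operator's $\epsilon$ this way). On the boundary layer, $\pi = \Gamma\phi - c(\hat\theta)\,\Gamma\frac{\nabla\mathcal P\nabla\mathcal P^\top}{\nabla\mathcal P^\top\Gamma\nabla\mathcal P}\Gamma\phi$ with $c = \min\{1,\mathcal P\}$, applied whenever $\nabla\mathcal P^\top\Gamma\phi > 0$. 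The operator's defining property is then
\[
\nabla\mathcal P(\hat\theta)^\top \pi \le 0 \quad \text{whenever } \mathcal P(\hat\theta)\ge 1 .
\]
This holds because, with $c = 1$, the outward component is removed exactly. With $\mathcal P = (\|\hat\theta\|^2 - r_\theta^2)/(\epsilon^2 + 2\epsilon r_\theta)$, the gradient is $\nabla\mathcal P \propto \hat\theta$.

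This is where the mismatch arises: the projection controls $\hat\theta^\top\pi$, while $\dot V$ contains $\hat\theta^\top\Gamma^{-1}\pi$. To resolve it, we will work with $W(\hat\theta) = \mathcal P(\hat\theta)$ directly, i.e. $W \propto \|\hat\theta\|^2$. Then
\[
\dot W \propto \hat\theta^\top \pi - \lambda\gamma\,\hat\theta^\top\Gamma s .
\]
Since $\Gamma = \operatorname{diag}(\gamma_i)$ with $\gamma_i > 0$, we get $\hat\theta^\top\Gamma s = \sum_i \gamma_i|\hat\theta_i| \ge 0$ for every selection $s$. This is exactly why the diagonal $\Gamma$ was required. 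So on the boundary $\|\hat\theta\| = r_\theta + \epsilon$ (where $\mathcal P = 1$) we have $\dot W \le 0$ almost everywhere along any solution.

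\emph{Conclusion and main obstacle.} The remaining step is the standard contradiction argument. Suppose a solution leaves $\Theta_\epsilon$. Let $t_1$ be the last time with $\|\hat\theta(t_1)\| = r_\theta+\epsilon$ before it reaches a point with $\|\hat\theta\| > r_\theta + \epsilon$. On the interval that follows, $\mathcal P \ge 1$, so $\dot W \le 0$ almost everywhere. Since $W\circ\hat\theta$ is absolutely continuous, $W$ is nonincreasing there, which contradicts the exit.

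The main technical obstacle is making this rigorous for the set-valued right-hand side. One part is noting that $G$ is upper semicontinuous with nonempty, compact, convex values, so solutions exist; this follows from the continuity of $\proj_\Theta$ and the properties of $\SGN$. The other part is that the inequality must hold for every element of $G$, not just along one selection. The componentwise computation above gives this uniformly in $s$, which handles it. For $t < T$, or before the history stack is filled, $\mathcal Y$ and $\mathcal U$ may be zero or piecewise constant. This only makes $\phi$ piecewise continuous in $t$, which is harmless for Carathéodory solutions. It yields $\hat\theta(t) \in \Theta_\epsilon$ for all $t$.
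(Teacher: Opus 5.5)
Your proof is correct, and its two key inequalities are the same as the paper's. First, the projection term has no outward component along $\hat\theta$. Second, because $\Gamma=\operatorname{diag}(\gamma_i)$, every selection $s\in\SGN(\hat\theta)$ gives $\hat\theta^\top\Gamma s=\sum_i\gamma_i|\hat\theta_i|\ge 0$.

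The difference lies in how invariance is concluded. The paper checks these inequalities only at points of $\partial\Theta_\epsilon$. It infers $G(t,\hat\theta)\subset T_{\Theta_\epsilon}(\hat\theta)$ there and leaves the rest to an Aubin--Cellina viability/invariance theorem. You instead take the smooth barrier $W=\mathcal P\propto\|\hat\theta\|^2$. You show $\nabla W^\top\rho\le 0$ for every $\rho\in G(t,\hat\theta)$ on the whole closed exterior $\{\mathcal P\ge 1\}$, where $c=\min\{1,\mathcal P\}=1$. You then finish with a last-exit-time argument on the absolutely continuous function $t\mapsto W(\hat\theta(t))$.

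The paper's route is shorter and uses only boundary information. However, the right-hand side here is upper semicontinuous and not Lipschitz. For such maps, a tangency condition imposed only on the boundary in general yields viability (some solution stays) rather than invariance of every solution; compare $\dot x=\sqrt{|x|}$ on $(-\infty,0]$. So the paper implicitly relies on more than its boundary check.

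Your route is self-contained and covers all solutions. The price is that it uses the specific Krsti\'c construction off $\Theta_\epsilon$. It also requires choosing the projection's boundary layer to be $\Theta_\epsilon$ or smaller, which you make explicit and the paper leaves implicit.

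Your exact identity for the signum term is cleaner than the paper's displayed chain, which loses a factor $1/\|\hat\theta\|$; this is harmless, since only the sign is used. The opening detour through $V=\tfrac12\hat\theta^\top\Gamma^{-1}\hat\theta$ can simply be dropped.
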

\begin{proof}
Let $\partial \Theta_{\epsilon}$ denote the boundary of $\Theta_{\epsilon}$, and let ${T_\Theta}_{\epsilon}(\hat{\theta})$ denote the tangent cone of $\Theta_{\epsilon}$ at $\hat{\theta}$. Because $\Theta_{\epsilon}$ is a Euclidean ball centered at the origin, the outward-pointing unit normal vector at any boundary point $\hat{\theta} \in \partial \Theta_{\epsilon}$ is given by $n(\hat{\theta}) = \frac{\hat{\theta}}{\|\hat{\theta}\|}$.

From \eqref{eq:theta_dot}, and the standard properties of the projection operator onto a closed convex set (see \cite[Lemma E.1]{SCC.Krstic.Kanellakopoulos.ea1995}), for any $\hat{\theta} \in \partial \Theta_{\epsilon}$, we have
\begin{equation}\label{eq:first_ineq}
n(\hat{\theta})^\top \proj_\Theta\left(\hat{\theta}, \phi(x, e, \hat{\theta}), \Gamma\right) \le 0.
\end{equation}

Next, we evaluate the inner product of the normal vector with the set-valued term $v_{\theta} \coloneqq -\lambda \gamma \Gamma \sigma$ in \eqref{eq:theta_dot}, where $\sigma \in \mathrm{SGN}(\hat{\theta})$. For $\hat{\theta} \in \partial \Theta_{\epsilon}$ and all $\sigma \in \mathrm{SGN}(\hat{\theta})$, we have
\begin{multline}\label{eq:second_ineq}
n(\hat{\theta})^\top v_{\theta}
\le -\lambda \gamma\, n(\hat{\theta})^\top \Gamma \sigma
= -\lambda \gamma\, \left( \frac{\hat{\theta}}{\|\hat{\theta}\|} \right)^\top \Gamma \sigma
\\\le -\lambda \gamma\, \lambda_{\min}(\Gamma) \|\hat{\theta}\|_1 \le 0.
\end{multline}
Then, it follows from \eqref{eq:first_ineq} and \eqref{eq:second_ineq} that for all $\rho \in G(t, \hat{\theta})$,
\begin{equation}
n(\hat{\theta})^\top \rho \le 0,
\end{equation}
which implies that, for all $\rho \in G(t, \hat{\theta})$,  $\rho \in {T_\Theta}_{\epsilon}(\hat{\theta})$. Invoking \cite[Theorem.~7]{aubin.cellina.1984}, $\Theta$ is positively invariant. Since $\Theta_{\epsilon}$ is compact, it follows that $\hat{\theta}(t)$ remains bounded for all $t \ge t_0$.
\end{proof}

\section{Stability Analysis}
\label{subsec:lyap}

To facilitate the analysis, we analyze the closed-loop system with error states $z = [\,e^\top,\,\tilde{\theta}^\top\,]^\top \in \mathbb{R}^{n+p}$. As shown in Lemma~\ref{lem:theta_bound}, due to the properties of the projection operator, if $\hat{\theta}(t_0) \in \Theta_{\epsilon}$, then $\hat{\theta}(t) \in \Theta_{\epsilon}$ for all $t \ge t_0$. Consequently, the parameter estimation error is uniformly bounded. We define this bounding set as the closed ball $\overline{B}_{\tilde{\theta}}(2r_{\theta} + \epsilon)$ of radius $2 r_{\theta} + \epsilon$, ensuring that $\|\tilde{\theta}(t)\| \le 2r_{\theta} + \epsilon$ for all $t \ge t_0$.

Define $d(t, e) \coloneqq Y(e + x_{d}(t))\tilde{\theta}(t)$. If $e \in \overline{B}_{e}(r_{e}) \coloneqq \{e \in \mathbb{R}^{n}: \|e\|\leq r_e\}$ for some $r_e > 0$, then $x = e + x_{d}(t) \in \overline{B}(r_{e} + \overline{x}_{d})\subset \mathcal X$.  Since $Y$ is continuous, there exists a constant $\bar Y > 0$ such that $\|Y(x)\| \le \bar Y$ for all $x \in \overline{B}_{x}(r_{e} + \overline{x}_{d}) \subseteq \mathcal{X}$. Consequently, the residual is bounded by
\begin{equation}
    \|d(t,e)\| \le   \bar d, \quad \forall e \in \overline{B}_{e}(r_{e}).
\end{equation}
where $d \coloneqq (2  r_{\theta} + \epsilon)\,\bar Y$. The error dynamics \eqref{eq:closed_error} can then be expressed as
\begin{equation}\label{eq:error_vector_field}
    \dot e = -K e + d(t,e),
\end{equation}
where $d(t,e)$ acts as a bounded disturbance. Since $Y$ is locally Lipschitz with respect to $x$, and both $x_d$ and $\tilde{\theta}$ are continuous in $t$, the vector field \eqref{eq:error_vector_field} satisfies the conditions of Picard-Lindelöf Theorem \cite[Theorem 3.1]{SCC.Coddington.Levinson1955}. Thus, for any initial condition $(t_0, e_0)$, there exists a $\delta > 0$ such that $[t_0, t_0+\delta)$ is the maximal interval of existence for the unique solution $e(\cdot)$ of \eqref{eq:error_vector_field}.

\begin{theorem}\label{thm:stability}
Consider the closed-loop system given by \eqref{eq:closed_error} and \eqref{eq:theta_dot}. Let $r_e > 0$ and $r > 0$.  Supposed that Assumption~\ref{ass:pseudo} and the initial closed-loop state satisfies
\begin{equation}
z(t_0) \in B_{e}(r_{e}) \times \overline{B}_{\tilde{\theta}}(2 r_{\theta} + \epsilon).
\end{equation}
Assume that the feedback gain $K = K^{\top} \succ 0$ is selected sufficiently large such that the gain conditions
\begin{equation}\label{eq:gain_condition_e}
\|e(t_0)\|^2 < r_e^2 \quad \text{and} \quad \frac{\overline{d}^2}{k^2} < r_e^2,
\end{equation}
hold, where $k \coloneqq \lambda_{\min}(K)$. Furthermore, assume that for $t \ge t_{s}$, the recorded ICL data history is sufficiently rich such that Assumption~\ref{ass:fullRank} holds and the control parameters $\gamma, \lambda, \Gamma$ are selected such that 
\begin{equation}\label{eq:gain_condition_z}
r > \frac{\underline{m}}{\overline{m}}\max\left\{r_e,2 r_{\theta} + \epsilon\right\} \quad \text{and} \quad \frac{\iota}{\alpha} < \frac{\underline{m}}{\overline{m}} r^2,
\end{equation}
where $\alpha \coloneqq \min(k, \gamma \underline{y})$, $\iota \coloneqq \lambda \gamma \sqrt{p} $, $\underline{m} \coloneqq \frac{1}{2}\min(1, \lambda_{\min}(\Gamma^{-1}))$, and $\overline{m} \coloneqq \frac{1}{2}\max(1, \lambda_{\max}(\Gamma^{-1}))$. Then, the following hold:
\begin{enumerate}
    \item  The solution is globally defined ($\delta = \infty$), and the error state trajectory $t \mapsto e(t)$ remains inside the the open ball $B_e(r_e)$ for all $t \in [t_0, t_{s})$.
    \item  At time $t_{s}$, the closed loop state satisfies $z(t_{s}) \in B_z(r)$.
    \item  For all $t \ge t_{s}$, the closed loop state $z(t) \in B_z(r)$ and converges to a set $\Omega_{ub,z}$, defined as
    \begin{equation}\label{eq:UUB_set}
    \Omega_{ub,z} \coloneqq \left\{ z \in \mathbb{R}^{n+p} \mid \|z\| \le \sqrt{\frac{\overline{m}}{\underline{m}} \left(\frac{\iota}{\alpha} \right)}\right\}.
    \end{equation}
    Furthermore, the ultimate bound is contained in the inner set, i.e., $\Omega_{ub,z} \subset \overline{B}_{z}(r_{s})$. \end{enumerate}
\end{theorem}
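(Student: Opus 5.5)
The proof splits at $t_s$: before it only the tracking error is analysed, after it a composite Lyapunov function is used.

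\textbf{Phase 1, $t\in[t_0,t_s)$.} On this interval the history stack need not be full rank, so I treat \eqref{eq:error_vector_field} as a stable linear system driven by the bounded disturbance $d(t,e)$. Take $V_e(e)=\tfrac12\|e\|^2$. Whenever $e\in\overline B_e(r_e)$,
\[
\dot V_e\le -k\|e\|^2+\overline d\|e\|,
\]
so $\dot V_e<0$ as soon as $\|e\|>\overline d/k$. The second gain condition in \eqref{eq:gain_condition_e} gives $\overline d/k<r_e$, and $\|e(t_0)\|<r_e$. A standard first-exit argument then applies. Suppose $t^\ast$ is the first time with $\|e(t^\ast)\|=r_e$. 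At $t^\ast$ we have $\dot V_e<0$, while $V_e$ had to increase to reach the sphere, which is a contradiction. Hence $e(t)\in B_e(r_e)$ on the maximal interval of existence.

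Since $e$ is bounded and $\hat\theta\in\Theta_\epsilon$ by Lemma~\ref{lem:theta_bound}, $x$ remains in the compact set $\mathcal X$. The solution therefore cannot escape in finite time, so $\delta=\infty$, which gives item 1. Existence of Filippov/Carathéodory solutions for the $\hat\theta$ inclusion is taken from the upper semicontinuity discussed before Lemma~\ref{lem:theta_bound}.

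\textbf{Phase 2, the state at $t_s$ (item 2).} At $t_s$ we have $\|e(t_s)\|<r_e$ and $\|\tilde\theta(t_s)\|\le 2r_\theta+\epsilon$, so $\|z(t_s)\|\le\sqrt2\max\{r_e,2r_\theta+\epsilon\}$. The first condition in \eqref{eq:gain_condition_z} is meant to absorb this bound into $B_z(r)$. I would read that condition as bounding $\max\{\cdot\}$ by a multiple of $r$, possibly after absorbing the $\sqrt2$ factor or reinterpreting the ratio $\underline m/\overline m$. I expect a bookkeeping mismatch in the constants here and would adjust the stated ratio if needed.

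\textbf{Phase 3, $t\ge t_s$ (item 3).} Use $V(z)=\tfrac12 e^\top e+\tfrac12\tilde\theta^\top\Gamma^{-1}\tilde\theta$, which satisfies $\underline m\|z\|^2\le V\le\overline m\|z\|^2$. Because $V$ is smooth, the set-valued derivative along the inclusion is
\[
\dot{\widetilde V}=e^\top(-Ke+Y\tilde\theta)-\tilde\theta^\top\Gamma^{-1}\big(\proj_\Theta(\cdot)-\lambda\gamma\Gamma\SGN(\hat\theta)\big).
\]
Two facts handle the projection and regression terms:
\begin{itemize}
\item The standard projection property \cite[Lemma E.1]{SCC.Krstic.Kanellakopoulos.ea1995} gives $-\tilde\theta^\top\Gamma^{-1}\proj(\cdot)\le-\tilde\theta^\top\Gamma^{-1}\phi$.
\item Since $\mathcal U=\mathcal Y\theta$ by Lemma~\ref{lem:ErrorTermformulation}, $\mathcal U-\mathcal Y\hat\theta=\mathcal Y\tilde\theta$.
\end{itemize}
With these, the cross terms $\pm e^\top Y\tilde\theta$ cancel and the ICL term becomes $-\gamma\tilde\theta^\top\mathcal Y\tilde\theta\le-\gamma\underline y\|\tilde\theta\|^2$.

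The sign term remains: $\lambda\gamma\tilde\theta^\top\sigma\le\lambda\gamma\sqrt p\|\tilde\theta\|=\iota\|\tilde\theta\|$. This gives
\[
\dot{\widetilde V}\le-\alpha\|z\|^2+\iota\|z\|.
\]
Here is the main obstacle. To match the stated ultimate bound $\sqrt{(\overline m/\underline m)(\iota/\alpha)}$, I need a constant term rather than a linear one. My first approach uses $\|\tilde\theta\|\le 2r_\theta+\epsilon$ to bound the linear term by a constant, or alternatively Young's inequality. Either way, I aim for a comparison of the form $\dot V\le-(\alpha/\overline m)V+c$.

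Invoking the comparison lemma for nonsmooth $V$ (Clarke/Filippov framework), $V(t)$ decreases whenever $V>\overline m\,\iota/\alpha$. The second condition in \eqref{eq:gain_condition_z} makes the sublevel set $\{V\le\underline m r^2\}$ contain this region. That set is therefore forward invariant, which keeps $z(t)\in B_z(r)$ and hence $e$ within the region where the bound $\overline Y$ holds. Finally, $z$ converges to $\Omega_{ub,z}$, and the inclusion $\Omega_{ub,z}\subset\overline B_z(r)$ follows from the same gain condition.
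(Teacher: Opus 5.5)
Your decomposition, Lyapunov functions, projection inequality and use of $\mathcal U=\mathcal Y\theta$ follow the paper's proof. You state the projection inequality with the correct sign; the paper's proof has it reversed. Your first-exit argument in Phase 1 is a valid replacement for the paper's comparison-lemma bound.

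The genuine gap is the last step of Phase 3. From your correct estimate $\dot V\le-\alpha\|z\|^2+\iota\|\tilde\theta\|$ you conclude that $V$ decreases whenever $V>\overline m\iota/\alpha$, but neither of your fixes gives this. The bound $\|\tilde\theta\|\le 2r_\theta+\epsilon$ yields the radius $\sqrt{\frac{\overline m}{\underline m}\frac{\iota(2r_\theta+\epsilon)}{\alpha}}$. Young's inequality (or simply $\dot V<0$ for $\|z\|>\iota/\alpha$) yields $\sqrt{\overline m/\underline m}\,\iota/\alpha$. These imply the stated radius only under an extra hypothesis such as $\iota\le\alpha$, since then $\iota/\alpha\le\sqrt{\iota/\alpha}$. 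Without such a hypothesis, item~3 is false. Take $n=m=p=1$, $Y\equiv g\equiv 1$ (so $\mathcal Y$ is a constant, call it $\underline y$), $\Gamma=1$, $\theta=4\le r_\theta$, $\lambda=5\underline y$ and $k$ large. Then $\alpha=\gamma\underline y$ and $\iota/\alpha=5$. Once $e$ is small, $|\gamma\underline y\theta+e|<\lambda\gamma$, so the estimate is driven to and held at the LASSO minimizer $\hat\theta=0$. Hence $|\tilde\theta|\to 4>\sqrt5$. The paper's proof obtains the constant $\iota$ only by silently dropping the factor $\|\tilde\theta\|$, so do not try to reproduce the stated radius; prove a correct one instead. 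A clean choice uses $\tilde\theta^\top\sigma=\theta^\top\sigma-\|\hat\theta\|_1\le\|\theta\|_1$. This gives $\dot V\le-\alpha\|z\|^2+\lambda\gamma\|\theta\|_1$, i.e., the stated result with $\iota$ replaced by $\lambda\gamma\sqrt p\,r_\theta$ in both $\Omega_{ub,z}$ and the second condition of \eqref{eq:gain_condition_z}.

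Two smaller points concern the handoff at $t_s$. Your sublevel-set argument needs $V(z(t_s))\le\underline m r^2$, not just $z(t_s)\in B_z(r)$. Since $\|z(t_s)\|^2<r_e^2+(2r_\theta+\epsilon)^2$, the hypothesis that works is $r^2\ge\frac{\overline m}{\underline m}\big(r_e^2+(2r_\theta+\epsilon)^2\big)$. The first condition in \eqref{eq:gain_condition_z} guarantees neither property: the ratio is inverted and the $\sqrt2$ you noticed is missing. State this corrected hypothesis explicitly rather than ``adjusting if needed.'' Also, $z\in B_z(r)$ does not keep $e$ in $\overline B_e(r_e)$, because $r>r_e$. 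You do not need $\overline Y$ in Phase 3, since the cross terms cancel. You do need $\overline B(r+\overline x_d)\subseteq\mathcal X$, so that $g^+$, and hence the control, stays defined.
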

\begin{proof}
\textit{Part 1 - Analysis on the interval $[t_0, t_{s})$:}

Consider the continuously differentiable candidate Lyapunov  function $V_{e}: \mathbb{R}^{n} \to \mathbb{R}$ defined as
\begin{equation}
    V_e(e) \coloneqq \frac{1}{2}e^\top e.
\end{equation}
 The time derivative of the function $t \mapsto V_e(e(t))$ along the trajectory $e(\cdot)$ exists for almost all $t \in [t_0, t_0+\delta)$. Substituting the dynamics from \eqref{eq:closed_error}, we obtain the bound
\begin{align}
\dot{V}_e(e(t)) &= e(t)^\top \dot{e}(t) \nonumber \\
&= e(t)^\top \big(-Ke(t) + d(t,e(t))\big) \nonumber \\
&\le -k\|e(t)\|^2 + \|e(t)\| \|d(t,e(t))\|,
\end{align}
for almost all $t \in [t_0, t_0+\delta)$. Completing the squares,
\begin{equation}
\dot{V}_e(e(t)) \le -\frac{k}{2}\|e(t)\|^2 + \frac{\|d(t,e(t))\|^2}{2k}, \label{eq:Vdot_bound}
\end{equation}
 for almost all $t \in [t_0, t_0+\delta)$.  Since $V_e(e) = \frac{1}{2}\|e\|^2$, the bound in \eqref{eq:Vdot_bound} can be rewritten as the differential inequality
\begin{equation}\label{eq:diff_ineq}
\dot{V}_e(e(t)) \le -k V_e(e(t)) + \frac{\|d(t,e(t))\|^2}{2k}.
\end{equation}

To establish boundedness, we ensure that the trajectory does not leave the set $\overline{B}_{e}(r_{e})$ where the bound $\|d(t,e(t))\| \le \overline{d}$ holds. We prove this by contradiction. Assume that $e(t_0) \in B_e(r_e)$ and there exists a time $T \in (0, \delta)$ such that $e(t_0+T) \notin \overline{B}_e(r_e)$. 

Because $e(\cdot)$ is continuous and $e(t_0) \in B_e(r_e)$, the Intermediate Value Theorem guarantees the existence of time instances $\epsilon_1 \in (0, T)$ and $\epsilon_2 \in (\epsilon_1, T]$ such that $e(t) \in \overline{B}_e(r_e)$ for all $t \in [t_0, t_0+\epsilon_1]$, and $e(t) \notin \overline{B}_e(r_e)$ for all $t \in (t_0+\epsilon_1, t_0+\epsilon_2)$. Over the interval $[t_0, t_0+\epsilon_1]$, the trajectory remains within $\overline{B}_e(r_e) \subset \mathcal{X}$, meaning the uniform bound $\|d(t,e(t))\| \le \overline{d}$ is valid. Applying the Comparison Lemma \cite[Lemma~3.4]{SCC.Khalil2002} to \eqref{eq:diff_ineq} over $t \in [t_0, t_0+\epsilon_1]$ yields
\begingroup\medmuskip=0mu\begin{equation}\label{eq:comparison_bound}
V_e(e(t)) \le V_e(e(t_0))e^{-k(t-t_0)} + \frac{\overline{d}^2}{2k^2} \left( 1 - e^{-k(t-t_0)} \right).
\end{equation}\endgroup
By substituting $V_e(e) = \frac{1}{2}\|e\|^2$ into \eqref{eq:comparison_bound}, we establish the norm bound on the tracking error $e$ as
\begin{multline}
\|e(t)\|^2 \le \|e(t_0)\|^2 e^{-k(t-t_0)} + \frac{\overline{d}^2}{k^2} \left( 1 - e^{-k(t-t_0)} \right)  \\
\le \max\left\{ \|e(t_0)\|^2, \, \frac{\overline{d}^2}{k^2} \right\}, \quad \forall t \in [t_0, t_0+\epsilon_1]. \label{eq:max_bound}
\end{multline}

Under the gain condition \eqref{eq:gain_condition_e}, the maximum of the initial tracking error and the disturbance-to-gain ratio is strictly less than $r_e^2$. Consequently, \eqref{eq:max_bound} implies the existence of a constant $\varpi > 0$ such that $\|e(t)\| \le r_e - \varpi < r_e$ for all $t \in [t_0, t_0+\epsilon_1]$. As a result, $t \mapsto e(t)$ is not continuous at $t_0+\epsilon_1$, which is a contradiction. Therefore, our initial assumption must be false, proving that $e(t) \in \overline{B}_e(r_e)$ for all $t \in [t_0, t_0+\delta)$. 

Because the solution is restricted to the compact set $\overline{B}_e(r_e)$ over its maximal interval of existence, it is precompact. Since precompact maximal solutions are complete, we conclude that $\delta = \infty$, i.e., the solution is defined and $e(t) \in \overline{B}_e(r_e)$ for all $t \in [t_0, t_{s}]$.

\textit{Part 2 - State Transition at $t = t_{s}$:}

At $t_{s}$, the tracking error satisfies $e(t_{s}) \in \overline{B}_e(r_e)$, while the projection operator ensures $\tilde{\theta}(t_{s}) \in \overline{B}_{\tilde{\theta}}(2 r_{\theta} + \epsilon)$. Hence, the closed-loop state satisfies
\begin{equation}
z(t_{s}) = \begin{bmatrix} e(t_{s}) \\ \tilde{\theta}(t_{s}) \end{bmatrix} \in \overline{B}_e(r_e) \times \overline{B}_{\tilde{\theta}}(2 r_{\theta} + \epsilon).
\end{equation}

\textit{Part 3 - Analysis on the interval $[t_{s}, \infty)$:}

Consider the closed ball $\overline{B}_z(r)\subset\mathbb{R}^{n+p}$ for $r>0$,
where $r>0$ is selected such that \eqref{eq:gain_condition_z} is satisfied. This selection of $r$ ensures that $z(t_s) \in B_z(r)$. For $t \geq t_{s}$, the system operates with a full-rank memory regressor, i.e., such that Assumption~\ref {ass:fullRank} holds.

Consider the candidate Lyapunov function $V: \mathbb{R}^{n + p} \to \mathbb{R}$ defined as
\begin{equation}\label{eq:V}
V(z) \coloneqq \frac{1}{2}\|e\|^2 + \frac{1}{2} \tilde{\theta}^{\top} \Gamma^{-1} \tilde{\theta}.
\end{equation}
which satisfies the bounds $\underline{m}\|z\|^2 \le V(z) \le \overline{m}\|z\|^2$ for all $z \in \mathbb{R}^{n + p}$, where $\underline{m}, \overline{m} > 0$ are constants defined below \eqref{eq:gain_condition_z}.

 The closed-loop system is described by a differential inclusion. Since the closed-loop system is locally essentially bounded and measurable, the differential inclusion meets the conditions of \cite[Thm.~2.2]{SCC.Shevitz.Paden1994} and as a result, there exists $\delta > 0$ such that starting from the initial condition $(t_s,z(t_s))$, the closed loop admits a maximal absolutely continuous solutions on $[t_s,t_s+\delta)$. Let $t \mapsto z(t)$ be one such solution. Let $\vartheta :[t_s,t_s+\delta)\to\mathbb{R}^p$ and $\sigma:[t_s,t_s+\delta)\to\mathbb{R}^p$ be measurable selections that occur due to discontinuous points in the closed loop error system, such that $\varrho(t)\in \operatorname{Proj}_\Theta\big(\hat{\theta}, \phi, \Gamma\big)$ and $ \sigma(t) \in \operatorname{SGN}(\hat{\theta})$, and  $\dot{\hat{\theta}}(t) = \vartheta(t) - \lambda \gamma \Gamma \sigma(t)$, for almost all $t\in[t_s,t_s+\delta)$. Since $\theta$ is constant, we have $\dot{\tilde{\theta}} = -\dot{\hat{\theta}}$. Whenever the time derivative of $t\mapsto V(z(t))$ exists, it satisfies
\begin{multline}
\dot{V}(z(t)) = -e^\top(t) K e(t) + e^\top(t) Y(x(t))\tilde{\theta}(t) \\- \tilde{\theta}^\top(t) \Gamma^{-1} \big( \vartheta(t) - \lambda \gamma \Gamma \sigma(t) \big). \label{eq:Vdot_z}
\end{multline}

Applying the fundamental property of the projection operator, $\tilde{\theta}^\top \Gamma^{-1} \vartheta \le \tilde{\theta}^\top \Gamma^{-1} \phi$. Substituting the definition of $\phi$ in \eqref{eq:theta_dot}, we have
\begin{multline}
\tilde{\theta}^\top \Gamma^{-1} \phi = \tilde{\theta}^\top \Gamma^{-1} \Gamma \Big(Y(x)^\top e + \gamma(\mathcal{U} + \mathcal{Y}\hat{\theta}) \Big) \\= \tilde{\theta}^\top Y(x)^\top e + \gamma \tilde{\theta}^\top (\mathcal{U} - \mathcal{Y}\hat{\theta}).
\end{multline}
By the design of the ICL memory stack, $\mathcal{U} = \mathcal{Y}\theta$. Consequently, $\mathcal{U} - \mathcal{Y}\hat{\theta} = \mathcal{Y}(\theta - \hat{\theta}) = \mathcal{Y}\tilde{\theta}$. Substituting these relations back into \eqref{eq:Vdot_z} yields
\begin{equation}
\dot{V}(z(t)) \le -e^\top(t) K e(t) - \gamma \tilde{\theta}^\top(t) \mathcal{Y} \tilde{\theta}(t) + \lambda \gamma \tilde{\theta}^\top(t) \sigma(t), \label{eq:Vdot_z}
\end{equation}
for almost all $t\in [t_s,t_s + \delta)$. Applying Assumption~\ref{ass:fullRank} and since $\|\sigma(t)\| \leq \sqrt{p}$, we obtain
\begin{equation}\label{eq:Vdot_final}
\dot{V}(z(t)) \le -\alpha\|z(t)\|^2 + \iota \le -\frac{\alpha}{\overline{m}}V(z(t)) + \iota.
\end{equation}
where $\alpha, \iota  > 0$ are constants defined below \eqref{eq:gain_condition_z}.

To establish the ultimate boundedness of the closed-loop state, we again proceed by contradiction.

For the sake of contradiction, assume that there exists some time $t^* > 0$ such that $z(t_{s}+t^*) \notin \overline{B}_z(r)$. Since $z(\cdot)$ is continuous and our selection of $r$ ensures that $z(t_s)\in B_z(r)$, by the Intermediate Value Theorem, there exist time instances $\rho_1 \in (0, t^*)$ and $\rho_2 \in (\rho_1, t^*]$ such that $z(t) \in \overline{B}_z(r)$ for all $t \in [t_{s}, t_{s}+\rho_1]$ and $z(t) \notin \overline{B}_z(r)$ for all $t \in (t_{s}+\rho_1, t_{s}+\rho_2)$.

Applying the Comparison Lemma to \eqref{eq:Vdot_final} over $t \in [t_{s}, t_{s}+\rho_1]$ and using the bounds on $V(z)$, we obtain
\begin{equation}
\|z(t)\|^2 \le \frac{\overline{m}}{\underline{m}}\|z(t_{s})\|^2 e^{-\frac{\alpha}{\overline{m}}(t-t_{s})} + \frac{\overline{m}}{\underline{m}} \frac{\iota}{\alpha} \left( 1 - e^{-\frac{\alpha}{\overline{m}}(t-t_{s})} \right).
\end{equation}
In particular, we have that
\begingroup\medmuskip=0mu\begin{equation}\label{eq:max_bound_z}
\|z(t)\|^2 \le \frac{\overline{m}}{\underline{m}} \max\left\{ \|z(t_{s})\|^2, \, \frac{\iota}{\alpha} \right\}, \, \forall t \in [t_{s}, t_{s}+\rho_1].
\end{equation}\endgroup

Provided the control parameters are selected according to \eqref{eq:gain_condition_z}, the bounds imply the existence of a constant $\varsigma > 0$ such that $\|z(t)\| < r - \varsigma$ for all $t \in [t_{s}, t_{s}+\rho_1]$. As a result, $t \mapsto z(t)$ is not continuous at $t_{s}+\rho_1$, which is a contradiction. 

Therefore, our assumption that there exists some time $t^* > 0$ such that $z(t_{s}+t^*) \notin \overline{B}_z(r)$ is false. Using completeness of pre-compact solutions\cite[Prop.~2]{SCC.Ryan1990} we conclude that $\delta = \infty$ and $z(t) \in \overline{B}_z(r)$ for all $t \ge t_{s}$. 

As $t \to \infty$, the state exponentially converges to the ultimate bound
\begin{equation}
\limsup_{t \to \infty} \|z(t)\| \le \sqrt{ \frac{\overline{m}}{\underline{m}} \left(\frac{\iota}{\alpha}\right) }.
\end{equation}
Since the solution $z(\cdot)$ being analyzed was selected arbitrarily, the ultimate bound holds for all solutions of the closed loop differential inclusion.


\end{proof}

\section{Simulation Study}

In this section, we evaluate the performance of the SP-ICL adaptive controller and study the effect of the sparsity regularization parameter $\lambda$. For the sake of this simulation, a nonlinear control-affine system of the form \eqref{eq:dynamics} is selected, with $Y$ defined as 
\begin{equation}
Y(x)=
\begin{bmatrix}
\phi(x)^\top & 0_{1\times10}\\
0_{1\times10} & \phi(x)^\top
\end{bmatrix}\in\mathbb{R}^{2\times20},
\end{equation}
where $\phi(x)=[1,x_1,x_2,x_1^2,x_1x_2,x_2^2,x_1^3,x_1^2x_2,x_1x_2^2,x_2^3]^{\!\top}$, sparse vector of true parameters is  $\theta = [0,\,-1,\,-1,\,0,\,0,\,0,\,0,\,0,\,0,\,0,\,
0,\,-\tfrac{1}{2},\,0,\,0,\,0,\,-\tfrac{1}{2},\,0,\,-\tfrac{1}{2}\\ ,\,0,\,0]^\top$. The control effectiveness matrix is selected as $g(x)=I_2$. 

The initial state is selected as $x(0) = [0.5\;0.5]^\top$, and the parameter estimate is initialized as $\hat{\theta}(t_{0}) = 0_{20 \times 1}$. The history stack is constructed online using the data selection procedure described in Algorithm~\ref{algo:dataselection}. The desired trajectory is defined as $x_d(t) = \begin{bmatrix} \sin(t)+0.12\sin(3t)-0.04\sin(5t) \\ 0.95\sin(2t)+0.08\sin(4t) \end{bmatrix}$. 

For the experiments, the selected proportional error gain is $K=10I_2$, the relative concurrent learning gain  $\gamma=0.1$, the adaptive update gain $\Gamma= I_{20}$, the minimum eigenvalue for the regressor  $\underline{y}=0.5$, and the normalization factor $\kappa= 0.01$. The integration window in Lemma~\ref{lem:ErrorTermformulation} is selected as $ T = 0.25$ seconds. The integration is performed for $100$ seconds using the \texttt{dde23} integrator in MATLAB.

Furthermore, to analyze the influence of the sparsity inducing term which is the primary contribution of the paper, We performed multiple experiments with $\lambda=[0, 10^{-4}, 5\times 10^{-4}, 10^{-3},5 \times 10^{-3}]$, and classified the parameter estimates into sparse and non-sparse classes, also producing a confusion matrix for the classification errors in the experiment.

\begin{figure}[t]
\centering
\begin{tikzpicture}
\begin{axis}[
    width=0.95\linewidth,
    height=0.70\linewidth,
    xlabel={$t\;(\mathrm{s})$},
    ylabel={$\|\tilde{\theta}(t)\|$},
    xmin=0, xmax=100,
    ymode=log,
    grid=major,
    grid style={line width=0.3pt, draw=gray!25},
    tick style={black},
    line cap=round,
    line join=round,
    label style={font=\small},
    tick label style={font=\small},
    legend style={
        font=\footnotesize,
        draw=none,
        fill=none,
        at={(0.5,-0.23)},
        anchor=north,
        legend columns=2,
        /tikz/every even column/.append style={column sep=0.8em},
        row sep=1pt,
    },
    axis line style={black},
    clip mode=individual,
]

\definecolor{c1}{RGB}{86,180,233}   
\definecolor{c2}{RGB}{230,159,0}    
\definecolor{c3}{RGB}{0,158,115}    
\definecolor{c4}{RGB}{204,121,167}  
\definecolor{c5}{RGB}{213,94,0}     
\definecolor{c6}{RGB}{240,228,66}   
\definecolor{c7}{RGB}{0,114,178}    
\definecolor{c8}{RGB}{150,150,150}  

\addplot[color=c1, line width=1.4pt]
table [x index=0, y index=1]
{results/lambda_0/data/parameter_estimation_error_norm.dat};
\addlegendentry{$\lambda=0$}

\addplot[color=c2, dashed, line width=1.4pt]
table [x index=0, y index=1]
{results/lambda_1em05/data/parameter_estimation_error_norm.dat};
\addlegendentry{$\lambda=10^{-5}$}

\addplot[color=c3, dotted, line width=1.4pt]
table [x index=0, y index=1]
{results/lambda_0p0001/data/parameter_estimation_error_norm.dat};
\addlegendentry{$\lambda=10^{-4}$}

\addplot[color=c4, dashdotted, line width=1.4pt]
table [x index=0, y index=1]
{results/lambda_0p001/data/parameter_estimation_error_norm.dat};
\addlegendentry{$\lambda=10^{-3}$}

\addplot[
    color=c5,
    line width=1.2pt,
    mark=o,
    mark size=1.6pt,
    mark repeat=12,
]
table [x index=0, y index=1]
{results/lambda_0p005/data/parameter_estimation_error_norm.dat};
\addlegendentry{$\lambda=5\times10^{-3}$}

\addplot[
    color=c6!80!black,
    line width=1.2pt,
    mark=square*,
    mark size=1.4pt,
    mark repeat=12,
]
table [x index=0, y index=1]
{results/lambda_0p01/data/parameter_estimation_error_norm.dat};
\addlegendentry{$\lambda=10^{-2}$}

\addplot[
    color=c7,
    line width=1.2pt,
    mark=triangle*,
    mark size=1.8pt,
    mark repeat=12,
]
table [x index=0, y index=1]
{results/lambda_0p05/data/parameter_estimation_error_norm.dat};
\addlegendentry{$\lambda=5\times10^{-2}$}

\addplot[
    color=c8,
    line width=1.2pt,
    mark=diamond*,
    mark size=1.6pt,
    mark repeat=12,
]
table [x index=0, y index=1]
{results/lambda_0p1/data/parameter_estimation_error_norm.dat};
\addlegendentry{$\lambda=10^{-1}$}

\end{axis}
\end{tikzpicture}
\caption{Parameter estimation error norm $\|\tilde{\theta}(t)\|$ (log scale) for different values of the regularization parameter $\lambda$.}
\label{fig:vdp_parameter_error_norm_all_lambda}
\end{figure}
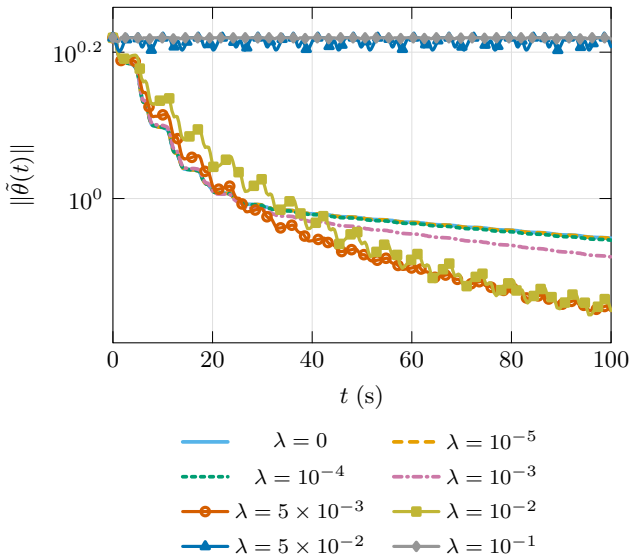

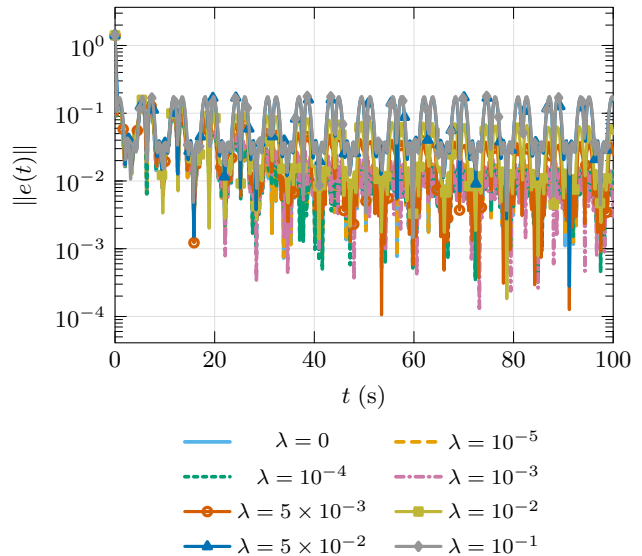
\begin{figure}[t]
\centering
\begin{tikzpicture}
\begin{axis}[
    width=0.95\linewidth,
    height=0.70\linewidth,
    xlabel={$t\;(\mathrm{s})$},
    ylabel={$\|e(t)\|$},
    xmin=0, xmax=100,
    ymode=log,
    grid=major,
    grid style={line width=0.3pt, draw=gray!25},
    tick style={black},
    line cap=round,
    line join=round,
    label style={font=\small},
    tick label style={font=\small},
    legend style={
        font=\footnotesize,
        draw=none,
        fill=none,
        at={(0.5,-0.23)},
        anchor=north,
        legend columns=2,
        /tikz/every even column/.append style={column sep=0.8em},
        row sep=1pt,
    },
    axis line style={black},
    clip mode=individual,
]

\definecolor{c1}{RGB}{86,180,233}
\definecolor{c2}{RGB}{230,159,0}
\definecolor{c3}{RGB}{0,158,115}
\definecolor{c4}{RGB}{204,121,167}
\definecolor{c5}{RGB}{213,94,0}
\definecolor{c6}{RGB}{240,228,66}
\definecolor{c7}{RGB}{0,114,178}
\definecolor{c8}{RGB}{150,150,150}

\addplot[
    color=c1,
    line width=1.4pt
]
table [x index=0, y index=1]
{results/lambda_0/data/tracking_error_norm.dat};
\addlegendentry{$\lambda=0$}

\addplot[
    color=c2,
    dashed,
    line width=1.4pt
]
table [x index=0, y index=1]
{results/lambda_1em05/data/tracking_error_norm.dat};
\addlegendentry{$\lambda=10^{-5}$}

\addplot[
    color=c3,
    dotted,
    line width=1.4pt
]
table [x index=0, y index=1]
{results/lambda_0p0001/data/tracking_error_norm.dat};
\addlegendentry{$\lambda=10^{-4}$}

\addplot[
    color=c4,
    dashdotted,
    line width=1.4pt
]
table [x index=0, y index=1]
{results/lambda_0p001/data/tracking_error_norm.dat};
\addlegendentry{$\lambda=10^{-3}$}

\addplot[
    color=c5,
    line width=1.2pt,
    mark=o,
    mark size=1.6pt,
    mark repeat=12
]
table [x index=0, y index=1]
{results/lambda_0p005/data/tracking_error_norm.dat};
\addlegendentry{$\lambda=5\times10^{-3}$}

\addplot[
    color=c6!80!black,
    line width=1.2pt,
    mark=square*,
    mark size=1.4pt,
    mark repeat=12
]
table [x index=0, y index=1]
{results/lambda_0p01/data/tracking_error_norm.dat};
\addlegendentry{$\lambda=10^{-2}$}

\addplot[
    color=c7,
    line width=1.2pt,
    mark=triangle*,
    mark size=1.8pt,
    mark repeat=12
]
table [x index=0, y index=1]
{results/lambda_0p05/data/tracking_error_norm.dat};
\addlegendentry{$\lambda=5\times10^{-2}$}

\addplot[
    color=c8,
    line width=1.2pt,
    mark=diamond*,
    mark size=1.6pt,
    mark repeat=12
]
table [x index=0, y index=1]
{results/lambda_0p1/data/tracking_error_norm.dat};
\addlegendentry{$\lambda=10^{-1}$}
\end{axis}
\end{tikzpicture}
\caption{Tracking error norm $\|e(t)\|$ (log scale) for different values of the regularization parameter $\lambda$.}
\label{fig:vdp_tracking_error_norm_all_lambda}
\end{figure}

\section{Discussion}
Figures~\ref{fig:vdp_parameter_error_norm_all_lambda} and \ref{fig:vdp_tracking_error_norm_all_lambda} show that the parameter estimation error and tracking error converge under the developed controller in \eqref{eq:control} and parameter update law in \eqref{eq:theta_dot}. These results are consistent with the guarantees established in Theorem~\ref{thm:stability}.

An important observation in Figures~\ref{fig:vdp_parameter_error_norm_all_lambda} and \ref{fig:vdp_tracking_error_norm_all_lambda} is the presence of chattering induced by the discontinuous signum term in the update law. This effect is further exacerbated by finite sampling and becomes more pronounced as the sparsity parameter $\lambda$ increases. For smaller values of $\lambda \in \{10^{-5}, 10^{-4}, 10^{-3}\}$, the chattering  amplitude remains relatively low. In contrast, higher sparsity levels, corresponding to $\lambda \in \{5\times10^{-3}, 10^{-2},5\times10^{-2}, 10^{-1}\}$, result in increased chattering, with $\lambda = 10^{-1}$ exhibiting the largest amplitude.

Additionally, the parameter estimation error decreases with increasing $\lambda$, indicating improved sparsity in the learned model. This behavior can be attributed to the reduction of spurious correlations among candidate basis functions, leading to a more accurate identification of the true underlying dynamics. For $\lambda \geq 10^{-2}$, the error begins to increase, indicating that excessive sparsity penalization can remove relevant basis functions and reduce model fidelity. 

Table~\ref{tab:main_results} further shows that as the sparsity parameter $\lambda$ increases, the parameter estimation error decreases. However, the tracking error increases with stronger sparsity regularization. This degradation in tracking performance may be attributed to higher-amplitude chattering induced by the non-smooth parameter updates associated with the $\ell_1$ regularization.

    Table~\ref{tab:confusion_results} and Figure~\ref{fig:confusionmatrix} show that sparsity increases, i.e., the number of nonzero terms decreases, as $\lambda$ increases, as expected. This effect is more pronounced in the interval $\lambda \in [10^{-3}, 10^{-2}]$, where a significant increase in sparsity is observed compared to smaller values of $\lambda$. This trend is also reflected in Fig.~\ref{fig:vdp_parameter_error_norm_all_lambda}, where the parameter estimation error is lower for $\lambda$ in this range. Furthermore, Table~\ref{tab:confusion_results} indicates that very large values of $\lambda$ result in fewer positive terms than required, leading to bias in the identified dynamics. This highlights a tradeoff between sparsity and estimation accuracy. We treat sparsity as a binary classification problem, labeling each basis function as sparse (positive) or non-sparse (negative), and evaluate performance using the F1 score\cite[Chapter~8]{SCC.Manning.Raghavan2009}. From Table~\ref{tab:confusion_results}, the F1 score is 0.59 for small $\lambda$, where correlated functions are also predicted as sparse. As $\lambda$ increases, precision improves while recall remains high, giving a maximum F1 score of 0.89 at $\lambda = 10^{-2}$. For larger $\lambda$, true positives are removed, reducing recall and driving the F1 score to zero. For this example, the best performance is obtained at $\lambda = 5 \times 10^{-2}$ and the best sparse system was identified at $\lambda = 10^{-2}$.

\begin{table}[t]
\centering
\caption{Effect of sparsity regularization on tracking performance and model recovery. Lower is better for error metrics.}
\label{tab:main_results}
\setlength{\tabcolsep}{5pt}
\renewcommand{\arraystretch}{1.15}

\begin{tabular}{c|c|cc}
\toprule
& \textbf{Sparsity} & \multicolumn{2}{c}{\textbf{Error}} \\
$\lambda$ & Nonzeros $\downarrow$ & $\|e\|$ $\downarrow$ & $\|\tilde{\theta}(t_f)\|$ $\downarrow$ \\
\midrule

0 
& 12 & \textbf{0.04452} & 0.88263 \\

$10^{-5}$ 
& 12 & 0.04453 & 0.88209 \\

$10^{-4}$ 
& 12 & 0.04456 & 0.87730 \\

$10^{-3}$ 
& 12 & 0.04507 & 0.83245 \\

$5\times10^{-3}$ 
& 8  & 0.05013 & \textbf{0.70261} \\

$10^{-2}$ 
& \textbf{4}  & 0.05973 & 0.70286 \\

$5\times10^{-2}$ 
& 1  & 0.09927 & 1.61225 \\

$10^{-1}$ 
& \textbf{0}  & 0.10226 & 1.65263 \\

\bottomrule
\end{tabular}
\end{table}

\begin{figure*}[t]
    \centering
    \resizebox{\linewidth}{!}{%
\begin{tikzpicture}[
    font=\footnotesize,
    cell/.style={
        draw=black!25,
        line width=0.45pt,
        minimum width=1.55cm,
        minimum height=1.15cm,
        align=center,
        inner sep=2pt,
        rounded corners=1.5pt
    },
    head/.style={font=\scriptsize\bfseries, text=cmtext},
    sub/.style={font=\tiny, text=black!70},
    lam/.style={font=\scriptsize\bfseries},
    total/.style={font=\tiny, text=black!75},
]

\newcommand{\cmpanel}[9]{%
\begin{scope}[shift={(#1,#2)}]
    \node[lam] at (1.55,2.95) {$\lambda=#3$};

    \node[head] at (1.55,2.45) {Predicted};
    \node[sub]  at (0.75,2.08) {Positive};
    \node[sub]  at (2.35,2.08) {Negative};

    \node[head, rotate=90] at (-0.72,0.82) {True};
    \node[sub, rotate=90] at (-0.20,1.35) {Positive};
    \node[sub, rotate=90] at (-0.20,0.28) {Negative};

    \node[cell, fill=cmgood!18] at (0.75,1.35) {\textbf{TP}\\#4};
    \node[cell, fill=cmbad!16]  at (2.35,1.35) {\textbf{FN}\\#6};
    \node[cell, fill=cmbad!16]  at (0.75,0.28) {\textbf{FP}\\#5};
    \node[cell, fill=cmgood!10] at (2.35,0.28) {\textbf{TN}\\#7};

    \node[total] at (1.55,-0.42) {Positive: #8 \hspace{1em} Negative: #9};
\end{scope}
}

\cmpanel{0.0}{0.0}{0}{5}{7}{0}{8}{12}{8}
\cmpanel{4.9}{0.0}{10^{-5}}{5}{7}{0}{8}{12}{8}
\cmpanel{9.8}{0.0}{10^{-4}}{5}{7}{0}{8}{12}{8}
\cmpanel{14.7}{0.0}{10^{-3}}{5}{7}{0}{8}{12}{8}

\cmpanel{0.0}{-4.6}{5\times10^{-3}}{5}{3}{0}{12}{8}{12}
\cmpanel{4.9}{-4.6}{10^{-2}}{4}{0}{1}{15}{4}{15}
\cmpanel{9.8}{-4.6}{5\times10^{-2}}{0}{1}{5}{14}{1}{19}
\cmpanel{14.7}{-4.6}{10^{-1}}{0}{0}{5}{15}{0}{20}

\end{tikzpicture}
}
    \caption{Confusion matrix statistics for sparse term recovery. Classification performance is reported for nonzero (positive) and zero (negative) terms using true positives (TP), false positives (FP), false negatives (FN), and true negatives (TN).}
    \label{fig:confusionmatrix}
\end{figure*}

\begin{table}[t]
\centering
\caption{Sparse term recovery performance. Confusion matrix counts and derived Precision(Pres.), Recall(Rec.) and F-1 metrics for nonzero term detection.}
\label{tab:confusion_results}
\setlength{\tabcolsep}{5pt}
\renewcommand{\arraystretch}{1.15}

\begin{tabular}{c|cc|ccc}
\toprule
& \multicolumn{2}{c|}{\textbf{Counts}} 
& \multicolumn{3}{c}{\textbf{Metrics}} \\
$\lambda$ 
& Positive & Negative 
& Prec & Rec & F1 \\
\midrule

0 
& 12 & 8 
& 0.42 & 1.00 & 0.59 \\

$10^{-5}$ 
& 12 & 8 
& 0.42 & 1.00 & 0.59 \\

$10^{-4}$ 
& 12 & 8 
& 0.42 & 1.00 & 0.59 \\

$10^{-3}$ 
& 12 & 8 
& 0.42 & 1.00 & 0.59 \\

$5\times10^{-3}$ 
& 8 & 12 
& 0.62 & 1.00 & 0.77 \\

$10^{-2}$ 
& 4 & 16 
& \textbf{1.00} & 0.80 & \textbf{0.89} \\

$5\times10^{-2}$ 
& 1 & 19 
& 0.00 & 0.00 & 0.00 \\

$10^{-1}$ 
& 0 & 20 
& -- & 0.00 & 0.00 \\

\bottomrule
\end{tabular}
\end{table}

\section{Conclusion}
This paper presents an online adaptive control framework with sparse identification that integrates $\ell_1$-regularization with ICL and trajectory tracking for nonlinear control-affine systems. The inclusion of $\ell_1$-regularization penalizes nonzero terms, reducing the co-occurrence of correlated basis functions and thereby promoting sparsity. The SP-ICL update law identifies relevant basis functions from an overcomplete dictionary of candidate basis functions. The stability of the resulting framework is established via Lyapunov-based analysis for non-smooth systems, where we prove ultimate boundedness of the trajectories of the closed-loop system while promoting sparsity.

Simulation results show that sparsity improves parameter convergence by penalizing basis functions that are not present in the dynamical system, while still preserving stability and tracking accuracy. To mitigate the observed chattering, future research could explore replacing the discontinuous signum term in the SP-ICL update law with continuous approximations, such as saturation or smooth hyperbolic tangent functions. Furthermore, we would explore adaptive $\ell_{1}$ regularization penalties to dynamically balance sparsity promotion and estimation accuracy, which could improve convergence for a broader class of nonlinear systems.

\small
\bibliographystyle{IEEETrans.bst}
\bibliography{scc, sccmaster, scctemp, mybib, cdc2026}

\end{document}